\def\draft{0}
\def\done{1}
\title{A Remark on Roberts' Totally Twisted Khovanov Homology}
\author{Thomas C. Jaeger}
\email{tcjaeger@syr.edu}
\address{Department of Mathematics;
215 Carnegie Building;
Syracuse University;
Syracuse, NY 13244-1150}
\newtheorem{theorem}{Theorem}[section]
\newcommand\newthm[2]{
\newaliascnt{#1}{theorem}
\newtheorem{#1}[#1]{#2}
\aliascntresetthe{#1}
\expandafter\newcommand\csname #1autorefname\endcsname{#2}
}
\theoremstyle{definition}
\tikzstyle{upper} = [preaction={draw=white, -, line width=4pt}]
\DeclareMathOperator{\Hom}{Hom}
\newcommand{\id}{\mathrm{id}}
\newcommand{\rKh}{\overline{Kh}}
\newcommand\F{\mathbb F}
\newcommand\Kob{\mathbf{Kob}}
\newcommand\CC{\mathcal C}
\newcommand\ce{\colonequals}
\newcommand\tikzcmd[2]{
\expandafter\newsavebox\csname #1savebox\endcsname
\expandafter\savebox\csname #1savebox\endcsname{\begin{tikzpicture}#2\end{tikzpicture}}
\expandafter\newcommand\csname #1\endcsname{\expandafter\usebox\csname #1savebox\endcsname}
}
\newcommand\vy{\reflectbox\vx}
\newcommand\hi{\raisebox{-0.05cm}{\rotatebox{90}\vi}}
\newcommand\hx{\raisebox{-0.05cm}{\rotatebox{90}\vx}}
\newcommand\hy{\raisebox{-0.05cm}{\rotatebox{90}\vy}}
\newcommand\hv{\raisebox{-0.05cm}{\rotatebox{90}\vh}}
\begin{document}

\begin{abstract}
We offer an alternative construction of Roberts' totally twisted Khovanov
homology and prove that it agrees with $\delta$-graded reduced
characteristic-$2$ Khovanov homology.
\end{abstract}

\maketitle

\section{Introduction}
Spanning tree models have been successfully used to study both knot Floer
homology and Khovanov homology of (quasi-)alternating links.  They appear
to occur in two flavors:  On the one hand, the models described by
Ozsv\'ath and Szab\'o for knot Floer homology \cite{ozsvath-szabo} and
by Wehrli~\cite{wehrli} and Champanerkar and Kofman~\cite{champanerkar-kofman}
for Khovanov homology completely determine the homology of the knot, but
their differentials are in general difficult to compute.  On the other hand,
the spanning tree models recently introduced by Baldwin and Levine for
knot Floer homology \cite{baldwin-levine} and by Roberts for Khovanov homology
\cite{roberts} have completely explicit differentials, but only recover
the homology with the bigrading collapsed to a single $\delta$-grading.

In the present note, we answer a question raised by Roberts, showing that for
knots, his theory indeed coincides with $\delta$-graded reduced Khovanov
homology.

\section{Definitions}

In order to be able to express the theory locally, we will need to relax the notion of a chain complex:

\begin{definition}
If $\CC$ is a graded additive category whose $\Hom$-sets are $\F_2$-algebras,
then we say that a \emph{chain complex} over $\CC$ is a an object $C$ of $\CC$
together with a differential $d: C \to C$ of degree $2$ such that $d^2 = 0$.
\end{definition}

When we deal with multiple gradings, we will refer to this grading as the
\emph{homological grading}. Based on this definition, it is straightforward to
redefine basic notions of homological algebra: A \emph{chain morphism} between
two chain complexes is a degree-$0$ morphism between the underlying objects
that commutes with the differentials and a \emph{chain homotopy} between chain
morphisms $f: C \to C'$ and $g: C \to C'$ is a morphism $h: C' \to C$ of degree
$-2$ such that $f + g = dh + hd$.  Given a tensor product $\otimes: \CC_1
\times \CC_2 \to \CC$, we define the tensor product of chain complexes $C_1 \in
\CC_1$ and $C_2 \in \CC_2$ as the object $C = C_1 \otimes C_2$ with
differential $d = d_1 \otimes 1 + 1 \otimes d_2$.  It is easy to see that this
operation is associative, but note that this uses the assumption that we work
in characteristic $2$ in an essential way.  In the familiar case that the
category $\CC$ is the category of graded modules over an (ungraded) ring $R$, the
above notions clearly agree with the usual definitions.

For our definition of Khovanov homology, we follow Bar-Natan's approach (see
\cite{bar-natan} for the general ideas and \cite{bar-natan-fast} for a
modification that works in characteristic $2$).  Let $R$ be a ring of
characteristic $2$.  The Khovanov homology of a tangle with endpoints $E$ is a
chain complex (in the above sense) over a certain bigraded category $\Kob(E)$
of crossingless tangles and $R$-linear combinations of dotted cobordisms
subject to certain relations modeled after the Frobenius algebra structure of
$R[x]/(x^2)$.  Specifically, the complexes associated to the positive and
negative crossings are
\[
Kh(\xpos): \raisebox{-0.1cm}{\scalebox2\vi}\{1\} \xrightarrow{\vhs} \raisebox{-0.1cm}{\scalebox2\hi}[2]\{2\}
\]
and
\[
Kh(\xneg): \raisebox{-0.1cm}{\scalebox2\hi}[-2]\{-2\} \xrightarrow\hvs \raisebox{-0.1cm}{\scalebox2\vi}\{-1\}
\text.
\]
We write $\vh$ for the differential of $Kh(\xpos)$ and $\hv$ for the
differential of $Kh(\xneg)$.
Here $[\cdot]$ denotes a shift in homological grading and $\{\cdot\}$ a shift
in $q$-grading (which is defined using the Euler characteristic of the
cobordisms).  We use the following conventions: The $q$-grading of a dot is
$-2$; if $f: C \to C'$ is a morphism of $q$-degree $d$, then the corresponding
morphism $f: C\{n\} \to C\{m\}$ has $q$-degree $d + m - n$ and similarly for
homological degree.

The reduced Khovanov homology of a link $L$ can now be defined as follows:
$Kh(L)$ is computed by tensoring the Khovanov complexes of the individual
crossings.  $Kh(L)$ is a complex of free $R[x]$-modules, where $x$ is the
action of the dot at the basepoint.  Here we used implicitly that
$\Kob(\emptyset)$ is equivalent to the category of finitely generated free
$R$-modules. The reduced Khovanov complex is simply $\rKh(L) \ce Kh(L)
\otimes_{R[x]} R$, where $x$ acts trivially on $R$.

We now define a twisted version of Khovanov homology.  Let $p_1, \dotsc, p_n$
be a collection of markings on the edges of a link diagram and $w_i \in R$ be
weights associated to those markings.  In addition to crossings, we add another
type of elementary diagram: an edge with a single marking of weight $w \in R$,
which we depict as $\mvv \, w$ (and often just as $\mvv$ if the weight is clear
from context).  We define $Kh(\mvv \, w) \ce \vv$ with
differential $w \, \xvv$.  The reduced twisted Khovanov homology associated
to a given marking can now be defined by taking the tensor product corresponding
to gluing the crossings and markings.  See \autoref{example-hopf} for an example.

Notice that the differential of the crossings has homological degree $2$ and
$q$-degree $0$, whereas the differential of markings has homological degree $0$
and $q$-degree $-2$.  In order for the definition to make sense, we therefore
need to collapse the two gradings to a single $\delta$-grading, where (with the
conventions used above) the $\delta$-grading is the difference of homological and
$q$-grading (note that our homological grading is twice the grading one usual
considers in Khovanov homology).

\begin{figure}
\hspace{-8cm}
\begin{tikzpicture}
\draw (0,0) to[out=90,in=180] (1,1);
\draw (0,0) to[out=-90,in=-180] (1,-1);
\draw (2,1) to[out=0,in=90] (3,0);
\draw (2,-1) to[out=-0,in=-90] (3,0);
\draw (1,1) to[out=-0,in=90] (2,0);
\draw[upper] (2,1) to[out=-180,in=90] (1,0);
\draw (1,0) to[out=-90,in=180] (2,-1);
\draw[upper] (2,0) to[out=-90,in=0] (1,-1);
\fill (0,0) circle (0.07cm);
\draw (0.9,0)--(1.1,0) node[right]{$w_1$};
\draw (1.9,0)--(2.1,0) node[right]{$w_2$};
\draw (2.9,0)--(3.1,0) node[right]{$w_3$};
\end{tikzpicture}
\vspace{-2.5cm}

\newcommand\hopfresolution[2]{
\begin{tikzpicture}[scale=0.5, baseline=-0.05cm]
\draw (0,0) to[out=90,in=180] (1,1);
\draw (0,0) to[out=-90,in=-180] (1,-1);
\draw (2,1) to[out=0,in=90] (3,0);
\draw (2,-1) to[out=-0,in=-90] (3,0);
\if 0#1
\draw (1,1) to[out=-0,in=90] (1,0);
\draw (2,1) to[out=-180,in=90] (2,0);
\else
\draw (1,1) to[out=-0,in=-180] (2,1);
\draw (1,0) to[out=90,in=90] (2,0);
\fi
\if 0#2
\draw (2,0) to[out=-90,in=180] (2,-1);
\draw (1,0) to[out=-90,in=0] (1,-1);
\else
\draw (1,0) to[out=-90,in=-90] (2,0);
\draw (2,-1) to[out=180,in=0] (1,-1);
\fi
\fill (0,0) circle (0.1cm);
\draw (0.8,0)--(1.2,0);
\draw (1.8,0)--(2.2,0);
\draw (2.8,0)--(3.2,0);
\end{tikzpicture}
}

\[
\xymatrix@R=0.1cm@C=1.5cm{
& & \bullet \ar[dddd]^{w_1+w_2 \quad \hopfresolution11} \\
& \hopfresolution10 \\ \\
& \bullet \ar[dr]^1 \\
\bullet \ar[dddd]_{\hopfresolution00 \quad w_2+w_3} \ar[ur]^1 \ar[dr]_1 & & \bullet \\
& \bullet \ar[ur]_1 \\ \\
& \hopfresolution01 \\
\bullet
}
\]
\caption{Twisted Khovanov homology of the Hopf link. Basepoint and weights are
as indicated at the top left of the figure. The rest of the figure shows the
differential of the complex, where each dot represents a copy of $R$. The
complex can still be viewed as a cube of resolutions; the four resolutions of
the link are drawn near their corresponding subquotient complexes.  Notice that
the horizontal differential is identical to the differential of the reduced
Khovanov homology of the Hopf link.}
\label{example-hopf}
\end{figure}

\section{Properties of twisted Khovanov homology}

In this section we prove the following invariance result.
\begin{theorem}\label{invariance}
Let $D$ be the diagram of a tangle with markings of weight
$w_{i1}, \dotsc, w_{in_i}$ on the $i$th component of the tangle. Then the
isomorphism type $Kh(D)$ depends only on $w_i \ce \sum_{j=1}^{n_i} w_{ij}$.
\end{theorem}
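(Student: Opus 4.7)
The plan is to reduce \autoref{invariance} to two elementary local moves: \emph{combining} two adjacent markings on the same strand, and \emph{sliding} a single marking past a crossing along the strand it sits on. Any two marking configurations with the same total weight on each component can be related by a finite sequence of such moves---slide everything on a given component to a single reference point and then combine---so it suffices to show that each move preserves the isomorphism type of $Kh(D)$.

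The combining move is immediate from the definitions. The tensor product $Kh(\mvv\, w_1)\otimes Kh(\mvv\, w_2)$ has underlying object $\vv\otimes\vv = \vv$ and differential $w_1\,\xvv + w_2\,\xvv = (w_1+w_2)\,\xvv$, which is precisely $Kh(\mvv\,(w_1+w_2))$; in particular, a weight-zero marking is equivalent to no marking at all.

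By the tensor structure of $Kh(D)$, sliding reduces to a local statement at a single crossing. For a positive crossing I would exhibit the explicit chain isomorphism
\[
\phi = \id + w\, H \colon (Kh(\xpos),\, d + w\, x_A) \longrightarrow (Kh(\xpos),\, d + w\, x_B),
\]
where $A$ and $B$ are the two endpoints of one strand of the crossing (so they lie on the same component of the tangle), $x_P$ denotes the dot-multiplication operator at position $P$, and $H = \hv\colon\hi\to\vi$, extended by zero to all of $Kh(\xpos)$. The condition that $\phi$ intertwine the two twisted differentials reduces, after matching powers of $w$, to the two identities
\[
x_A + x_B = dH + Hd \qquad\text{and}\qquad H\, x_A = x_B\, H,
\]
while invertibility is automatic: $H^2 = 0$ forces $\phi^2 = \id$ in characteristic $2$.

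The main content is the first identity, which unpacks to the neck-cutting-type relations $\hv\circ\vh = \vx + \vy$ on $\vi$ and $\vh\circ\hv = \hx + \hy$ on $\hi$. I would verify these by closing the four endpoints of the crossing into a link (so that, say, $\vi$ closes to two circles while $\hi$ closes to one) and computing both sides explicitly in the Frobenius algebra $R[x]/(x^2)$; the second identity and $H^2 = 0$ then follow from dot migration within the connected saddle cobordism. The negative-crossing case is handled symmetrically with $H = \vh$. The main obstacle is the verification of these two neck-cutting identities in characteristic $2$; everything else is bookkeeping with the tensor structure and the two-step reduction described above.
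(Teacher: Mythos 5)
Your proof is correct and follows essentially the same route as the paper's: the paper likewise combines adjacent markings via the tensor product and slides a marking past a crossing using exactly your isomorphism $\id + w\,\hv$, written there as the matrix $\begin{pmatrix} \vi & w \hv \\ & \hi \end{pmatrix}$ between $Kh(\mtopleft)$ and $Kh(\mbottomright)$, with the verification left to the reader. One small caution: since the theorem concerns tangles, the identities $\hv\circ\vh = \vx + \vy$ and $\vh\circ\hv = \hx + \hy$ must hold in the cobordism category $\Kob(E)$ itself, so rather than checking them after closing up the endpoints (which only tests their image under a functor that need not be faithful), you should observe that they are literally instances of the neck-cutting relation defining Bar-Natan's dotted cobordism category; your dot-migration arguments for $H x_A = x_B H$ and $H^2 = 0$ are fine as stated.
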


\begin{corollary}
If $K$ is a knot then $\rKh(K)$ does not depend on the markings, in
particular, it is isomorphic to $\delta$-graded Khovanov homology.
\end{corollary}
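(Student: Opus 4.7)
The plan is to reduce the theorem to two local invariance moves on marked diagrams: combining two adjacent markings on a single edge, and sliding a single marking past a crossing within its own component. Together these moves suffice: on each component one iteratively slides all markings onto a chosen edge and then fuses them into one marking of weight $w_i$, so the isomorphism type visibly depends only on the $w_i$.

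The combination move is immediate from the tensor product definition: $Kh(\mvv\,w_1)\otimes Kh(\mvv\,w_2)$ has underlying object $\vv$ and differential $w_1\,\xvv + w_2\,\xvv = (w_1+w_2)\,\xvv$, which is literally $Kh(\mvv\,(w_1+w_2))$, no nontrivial isomorphism required.

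The sliding move is the technical heart. Locally at a positive crossing $\xpos$, a marking of weight $w$ on any of the four incident edges gives a complex on the fixed underlying graded object $\vi\{1\}\oplus\hi[2]\{2\}$; only the dot summand of the self-differential depends on the marking's position (the dot lives on $\vx$ or $\vy$ in $\vi$, and on $\hx$ or $\hy$ in $\hi$, according to which edge carries the marking). A degree count shows that the $\delta$-degree-$0$ off-diagonal endomorphisms of this underlying object are one-dimensional, spanned by the reverse saddle $\hv$ viewed as a map $\hi[2]\{2\}\to\vi\{1\}$. I would then verify that the explicit map
\[
\phi = \begin{pmatrix} \mathrm{id} & w\,\hv \\ 0 & \mathrm{id} \end{pmatrix}
\]
is a chain isomorphism between the ``marking on NW'' and ``marking on SE'' versions; since $2w\,\hv = 0$ in characteristic $2$, $\phi$ is its own inverse. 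The verification uses only three standard ingredients in the Bar-Natan cobordism category: the ``tube'' identities $\hv\vh = \vx + \vy$ and $\vh\hv = \hx + \hy$ (instances of neck cutting applied to the annular compositions, which have $\chi = 0$), and dot sliding on the connected saddle surface (which gives $\hv\,\hx = \vy\,\hv$, and similar). In characteristic $2$ all cross terms vanish and the four entries of the chain-map equation $\phi\,d_{\text{NW}} = d_{\text{SE}}\,\phi$ collapse using precisely these relations. An analogous $\phi$ handles the NE/SW local component and the case of a negative crossing.

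The main obstacle will be recognizing the form of $\phi$: the naive identity map is not a chain map because the dot actions in the two versions are genuinely distinct morphisms in $\Kob$. What makes the construction work is that the $\delta$-degree constraint forces the off-diagonal correction to be essentially unique (a scalar multiple of $\hv$), while the standard characteristic-$2$ Bar-Natan relations make the chain-map identity collapse cleanly. Once the sliding and combination moves are in place, the theorem follows by the straightforward consolidation argument described in the first paragraph.
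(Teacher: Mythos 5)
Your two local moves (fusing adjacent markings and sliding a marking past a crossing via $\phi = \begin{pmatrix} \id & w\,\hv \\ 0 & \id \end{pmatrix}$, using the tube relations $\hv\vh = \vx+\vy$, $\vh\hv = \hx+\hy$ and dot sliding) are exactly the content of the paper's invariance theorem, and your verification is sound. But what they yield is only that theorem: the isomorphism type of $Kh(K)$ depends on the markings only through the total weight $w = \sum_j w_j$ on the (single) component. The corollary asserts strictly more, namely that $\rKh(K)$ is independent of the markings altogether --- in particular independent of $w$ --- and is isomorphic to untwisted $\delta$-graded reduced Khovanov homology. Your proposal stops at ``depends only on the $w_i$'' and never explains why the residual weight $w$ disappears, so as written it does not prove the statement.

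The missing step is the reduction at the basepoint. Apply the invariance theorem to the knot cut open at the basepoint (so the component is not closed and all markings really can be pushed to one end), and consolidate all markings onto the edge containing the basepoint. The differential of $Kh(K)$ is then $d + w\,x$, where $d$ is the untwisted differential and $x$ is the action of the dot at the basepoint. Since $\rKh(K) = Kh(K) \otimes_{R[x]} R$ with $x$ acting trivially, the term $w\,x$ dies after reducing, and the differential on $\rKh(K)$ is $d \otimes 1$ --- the untwisted reduced differential --- for every choice of markings. This one-line observation is what turns your invariance argument into the corollary; note also that it forces the ``chosen edge'' in your consolidation to be the basepoint edge, not an arbitrary one.
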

\begin{proof}
By the theorem above (technically applied to the knot cut open at the
basepoint), we may move all markings to the same edge that the basepoint lies
on.  Thus the differential of $Kh(K)$ is $d + w x$, where $d$ is the
differential of the untwisted Khovanov homology, $w$ is the sum of all the
weights, and $x$ is the action of the dot at the basepoint as before. Hence the
differential on $\rKh(K)$ is simply $d \otimes 1$.
\end{proof}

\begin{proof}[of \autoref{invariance}]
First, note that the Khovanov complex of two adjacent markings of weight $w_1$
and $w_2$ is $Kh(\mmvv) = \vv$ with differential
$d = w_1 \, \xvv \otimes 1 + 1 \otimes w_2 \, \xvv = (w_1 + w_2) \, \xvv$.

Next, we prove that we can slide a marking past a crossing:
\[
\xymatrix@R=1.5cm@C=3cm{
\vi \oplus \hi
\ar[r]^{\begin{pmatrix} w \, \vx \\ \vh & w \, \hy \end{pmatrix}}
\ar@<0.3ex>[d]_{\begin{pmatrix} \vi & w \hv \\ & \hi \end{pmatrix}}
&
\vi \oplus \hi
\ar@<0.3ex>[d]_{\begin{pmatrix} \vi & w \hv \\ & \hi \end{pmatrix}}
\\
\vi \oplus \hi
\ar[r]_{\begin{pmatrix} w \, \vy \\ \vh & w \, \hx \end{pmatrix}}
\ar@<0.3ex>[u]_{\begin{pmatrix} \vi & w \hv \\ & \hi \end{pmatrix}}
&
\vi \oplus \hi
\ar@<0.3ex>[u]_{\begin{pmatrix} \vi & w \hv \\ & \hi \end{pmatrix}}
}
\]
The top row is the differential of $Kh(\mtopleft)$ and the bottom row
is the differential of $Kh(\mbottomright)$.  The arrows pointing up
and down define an isomorphism between $Kh(\mtopleft)$ and
$Kh(\mbottomright)$, as can be easily checked.  An isomorphism between
$Kh(\mtopright)$ and $Kh(\mbottomleft)$ can be established in a similar fashion.

Finally, we note that since we can always slide markings away from tangles
without closed components, invariance of the homotopy type of $Kh(L)$ under
Reidemeister moves follows immediately from the invariance of the untwisted
theory.

\end{proof}

\section{Spanning Trees}

The version of twisted Khovanov homology defined in this note admits
a spanning tree model along the same lines as the models in \cite{baldwin-levine}
and \cite{roberts}.
$\rKh(L)$ has a natural double complex structure: The horizontal differential
is simply the differential of untwisted reduced Khovanov homology, whereas
the vertical differential is induced by the weights.  This double complex
structure gives rise to a spectral sequence converging to $\rKh(L)$.

If $R$ is a field and each edge of the graph (with the exception of the edge that
contains the basepoint) has a non-zero weight associated to it and all weights
are linearly independent over $\F_2$, we can describe the spectral sequence
explicitly: The $d_0$ differential cancels all states corresponding to
disconnected resolutions of the diagram and the remaining states are in
one-to-one correspondence with spanning trees.  For grading reasons, all
subsequent differentials except for the $d_2$ differential are zero and an
explicit calculation shows that the $d_2$ differential between two resolutions
$R$ and $R'$ that differ at two places (such that $R$ is the $0$-resolution at
those places and $R'$ is the $1$-resolution) is
$\frac1{\sum_i w_i} + \frac1{\sum_i w'_i}$, where $w_i$ and $w'_i$ are the
weights on the circles not containing the basepoint in the two intermediate
resolutions between $R$ and $R'$.  Otherwise, $d_2 = 0$.

\section{Relation to Roberts' totally twisted Khovanov homology}

In this section we show that the theory described earlier coincides
with Roberts' totally twisted Khovanov homology.  Robert's theory
is defined over the ring $\F_2(x_1, \dotsc, x_n)$, where the $x_i$
are labels associated to the regions of the knot diagram.  The variables
associated to the two regions $x_{n-1}$ and $x_{n}$ adjacent to the basepoint
are never used in the differential, hence we may consider the same theory
over the ring $R' \ce \F_2(x_1, \dotsc, x_{n-2})$.
Define vector spaces $X \ce \F_2 \langle x_1, \dotsc, x_{n-2} \rangle$ and
$Y \ce \F_2 \langle y_1, \dotsc, y_m \rangle$, where the $y_j$ are variables
associated to the edges of the diagram not containing the basepoint and define
a linear map $f: X \to Y$ by sending $f(x_i)$ to the sum of the $y_j$
associated to the edges that make up the boundary of the region labeled $x_i$.  We
claim that $f$ is injective.  Assume that $f(\sum_i \lambda_i x_i) = 0$.  If
$x_{i_1}$ and $x_{i_2}$ ($i_1, i_2 \leq n-2$) are the labels of two adjacent
regions, then $\lambda_{i_1} = \lambda_{i_2}$ since the $y_j$ associated to
their common edge does not appear in any of the $f(x_i)$ for $x \neq x_{i_1},
x_{i_2}$.  Thus $\lambda_1 = \dotso = \lambda_{n-2}$.  By considering an edge
adjacent to $x_{n-1}$ or $x_n$, we see that in fact $\lambda_i = 0$ for all
$1 \leq i \leq n-2$, hence $f$ is injective as claimed.  Fix a linear map
$g: Y \to X$ satisfying $g \circ f = \id_X$.  Note that if $C$ is a
basepoint-avoiding circle in any resolution of the diagram, then the sum over
all $x_i$ corresponding to the region of $S^2 \backslash C$ not containing the basepoint
is the same as the sum over $g(y_j)$, where $y_j$ runs over the edges of $C$.
Thus we can obtain Robert's theory by working over the ring $R'$ and assigning
the weight $w_j \ce g(y_j)$ to a marking on the $j$th edge.

Similarly, given weights $w_j \in R$, we obtain the theory described in this
note from Roberts' definitions by specializing $x_i$ to be the sum of the
weights of the markings on the boundary of the region labeled $x_i$.

\vfill

\begin{bibdiv}
\begin{biblist}

\bib{baldwin-levine}{misc}{
   author={Baldwin, John},
   author={Levine, Adam},
   title={A combinatorial spanning tree model for knot Floer homology},
   date={2011},
   note={arXiv:1105.5199v2 [math.GT]},
}

\bib{bar-natan}{article}{
   author={Bar-Natan, Dror},
   title={Khovanov's homology for tangles and cobordisms},
   journal={Geom. Topol.},
   volume={9},
   date={2005},
   pages={1443--1499 (electronic)},
   issn={1465-3060},
%   review={\MR{2174270 (2006g:57017)}},
%   doi={10.2140/gt.2005.9.1443},
}

\bib{bar-natan-fast}{article}{
   author={Bar-Natan, Dror},
   title={Fast Khovanov homology computations},
   journal={J. Knot Theory Ramifications},
   volume={16},
   date={2007},
   number={3},
   pages={243--255},
   issn={0218-2165},
%   review={\MR{2320156 (2009d:57004)}},
%   doi={10.1142/S0218216507005294},
}

\bib{champanerkar-kofman}{article}{
   author={Champanerkar, Abhijit},
   author={Kofman, Ilya},
   title={On mutation and Khovanov homology},
   journal={Commun. Contemp. Math.},
   volume={10},
   date={2008},
   number={suppl. 1},
   pages={973--992},
   issn={0219-1997},
%   review={\MR{2468373 (2009k:57017)}},
%   doi={10.1142/S0219199708003113},
}

\bib{ozsvath-szabo}{article}{
   author={Ozsv{\'a}th, Peter},
   author={Szab{\'o}, Zolt{\'a}n},
   title={Heegaard Floer homology and alternating knots},
   journal={Geom. Topol.},
   volume={7},
   date={2003},
   pages={225--254 (electronic)},
   issn={1465-3060},
%   review={\MR{1988285 (2004f:57040)}},
%   doi={10.2140/gt.2003.7.225},
}

\bib{roberts}{misc}{
   author={Roberts, Lawrence},
   title={Totally Twisted Khovanov Homology},
   date={2011},
   note={arXiv:1109.0508v1 [math.GT]},
}

\bib{wehrli}{article}{
   author={Wehrli, S.},
   title={A spanning tree model for Khovanov homology},
   journal={J. Knot Theory Ramifications},
   volume={17},
   date={2008},
   number={12},
   pages={1561--1574},
   issn={0218-2165},
%   review={\MR{2477595 (2010d:57016)}},
%   doi={10.1142/S0218216508006762},
}

\end{biblist}
\end{bibdiv}

\end{document}